\documentclass[12pt]{amsart}%\documentclass[12pt,a4paper]{amsart}
\usepackage{amsfonts}

\usepackage[letterpaper]{geometry}

\usepackage{amssymb}
\setlength{\topmargin}{0pt}
\setlength{\headheight}{28pt}

%\setlength{\textwidth}{16.5cm}
%\setlength{\textheight}{25cm}
%\addtolength{\textheight}{3cm}
\setlength{\oddsidemargin}{0pt}
\setlength{\evensidemargin}{0pt}
\setlength{\headsep}{18pt}

\def\R{\mathbb{R}}

\def\N{\mathbb{N}}

\newtheorem{thm}{\bf Theorem}[section]
\newtheorem{lemma}[thm]{\bf Lemma}
\newtheorem{prop}[thm]{\bf Proposition}

\theoremstyle{definition}

\begin{document}

\title{ON arithmetic sums of Cantor-type sequences of integers}

 \author[Norbert Hegyv\'ari]{Norbert Hegyv\'ari}
 \address{Norbert Hegyv\'{a}ri, ELTE TTK,
E\"otv\"os University, Institute of Mathematics, H-1117
P\'{a}zm\'{a}ny st. 1/c, Budapest, Hungary and associated member of Alfr\'ed R\'enyi Institute of Mathematics, Hungarian Academy of Science, H-1364 Budapest, P.O.Box 127.}
 \email{hegyvari@renyi.hu}

\begin{abstract}
We are looking for integer sets that resemble classical Cantor set and investigate the structure of their sum sets. Especially we investigate $FS(B)$ the subset sum of sequence type $B=\{\lfloor p^n\alpha\rfloor\}^\infty_{n=0}$. When $p=2$, then we prove $FS(B)+FS(B)=\N$ by analogy with the Cantor set, and some structure theorem for $p>2$.

MSC 2020 Primary 11B13,  Secondary 11B75, 11B25

Keywords: Cantor set arithmetic, van der Waerden theorem,  OEIS A001511, subset sums, pseudo recursive sequences.
\end{abstract}

 \maketitle

\section{Introduction}
The standard and generalized Cantor sets have a wide literature. Generally a set $C\subseteq \R$ is said to be Cantor set, if $C=J\setminus \cup_{i\geq 1} N_i$ where J is a bounded closed interval and $\{N_i\}$ is a countable -- finite or infinite --
system of disjoint open intervals contained in $J$. The standard (and well-known) Cantor set has many different definition. We now recall one of its properties that best fits our discussion. The Cantor set $C$ is a nested intersection of closed sets  in the form $C=\cap_{i\geq 1} C_i$, $C_1=[0,1]$, $C_2=[0,1/3]\cup[2/3,1]$ e.t.c. and $C_i\supset C_{i+1}$, $i\in \N$. Considering $C_n$ one can list in increasing order of the left open intervals $N_1$ (with length $1/3^{n-1}$), $N_2$ with length $1/3^{n-2}$, $N_1$ with length $1/3^{n-1}$ and so on. Listing the indeces of the open intervals gives 1,2,1,3,1,2,1,4,1,2,1,3,1,2,1,5,\dots which sequence is OEIS A001511 (Interestingly, the Cantor set is not included on this page). 

Another property of the Cantor set is the following: write $C=[0,1]\setminus \cup_{i\geq 1} N_i=\cap_{i\geq 1} C_i$. Let $N_i=(\beta_i, \gamma_i)$ be any gap in $C_n$, and assume that $[\alpha_i, \beta_i]$ is largest interval left adjacent to $N_i$ that
contains no gap of $C$ of length $\geq |N_i|$. Then $(C\cap [\alpha_i, \beta_i])+t_i\subseteq C$, where $t_i=\gamma_i-\alpha_i$ and $N_i+t_i\subseteq N_j=(\beta_j, \gamma_j)$, for some $j\in \N$. Call this property piecewise shift invariant. The finite prototype of this sequence is the Salem-Spencer set (see [SS]).

Let $A\subseteq \N$ and assume that $0,1\in A$. We say that $A$ is a Cantor-type sequence of integers, if $A=\N\setminus \cup_{i\geq 1} (x_i,y_i)$, where $(x_i,y_i)$, ($i\in \N)$ is nonempty interval (call it gap), and $A$ is piecewise shift invariant.

Let $P_1=\{0,1,k:k>2\}$,$P_2=\{0,1,2,3,k:k>4\}$, $P_3=\{0,1,\dots, 7,k; \ k>8\}$ and $P_4=\{0,1,\dots, r,k:r\geq 10; \ k>r+1\}$. 
The following inverse statement shows a relationship between Cantor sets and subset sums (see similar inverse results in [H1] and [N1]):

\begin{prop}{\it 
Let $A$ be a Cantor-type sequence if integers with some prefix $P_i$, $i=1,2,3$ or $4$. There exists a $B=\{b_1<b_2<\dots \}\subseteq \N$ such that $A=FS(B)$, where $FS(B):=\{\sum_{i=1}^\infty\varepsilon_ib_i: \ b_i\in B, \ \varepsilon_i \in \{0,1\}, \ \sum_{i=1}^\infty\varepsilon_i<\infty\}$ is a finite subset sum of $B$. 
Conversely if $B=\{1=b_1<b_2<\dots \}\subseteq \N$ is any sequence with $b_{i+1}> b_1+b_2+\dots b_i$ then $FS(B)$ is a  Cantor-type sequence of integers.}
\end{prop}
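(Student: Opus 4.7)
\emph{Converse direction.} The hypothesis $b_{k+1}>\sigma_k$ (where $\sigma_k=b_1+\dots+b_k$) gives a disjoint decomposition
\[
F_{k+1}=F_k\;\sqcup\;(b_{k+1}+F_k),\qquad F_k\subseteq[0,\sigma_k],\quad b_{k+1}+F_k\subseteq[b_{k+1},\,b_{k+1}+\sigma_k],
\]
where $F_k:=FS(\{b_1,\dots,b_k\})$, and the two blocks are separated by the primary gap $(\sigma_k,b_{k+1})$. Iterating gives $FS(B)=\bigcup_k F_k$, each element has a unique subset-sum expansion, and the gap list decomposes into the primary gaps $(\sigma_k,b_{k+1})$ together with their translates into the upper blocks. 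To check piecewise shift invariance for a primary gap $N_k=(\sigma_k,b_{k+1})$ the plan is to use the shift $t=b_{k+1}$: by the decomposition, $F_k+b_{k+1}\subseteq F_{k+1}\subseteq FS(B)$, and $N_k+b_{k+1}$ lands in the next primary gap or in one of its translates. For a translated gap the same argument applies after descending to the appropriate $F_m$-block, so the verification closes by induction on the depth of the gap.

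\emph{Forward direction.} Given a Cantor-type $A$ with prefix $P_j$, the plan is to read off the $b_i$'s directly from $A$. The prefix dictates the initial terms: if the interval part of the prefix is $\{0,1,\dots,2^m-1\}\subseteq A$, I would set $b_i=2^{i-1}$ for $i\leq m$, and take $b_{m+1}$ to be the first element of $A$ strictly exceeding $\sigma_m=2^m-1$. I would then extend inductively: once $b_1,\dots,b_k$ have been fixed so that $A\cap[0,\sigma_k]=F_k$, let $b_{k+1}$ be the smallest element of $A$ strictly greater than $\sigma_k$. The prefix condition forces $b_{k+1}>\sigma_k$, so super-increasing is preserved. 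To close the induction by showing $A\cap[0,\sigma_{k+1}]=F_{k+1}$ I would apply the piecewise shift invariance of $A$ to $N_k=(\sigma_k,b_{k+1})$: the admissible shift $t=b_{k+1}$ must send $N_k$ into a later gap, and then $(A\cap[\alpha,\sigma_k])+t\subseteq A$ yields the translated copy $F_k+b_{k+1}$.

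\emph{Main obstacle.} The delicate point in both directions is pinning down the ``largest left-adjacent interval $[\alpha,\sigma_k]$ containing no gap of length $\geq|N_k|$'': if some earlier primary gap $N_j$ with $j<k$ already has size $\geq|N_k|$, then $\alpha$ is strictly positive and a single application of the shift $t=b_{k+1}$ covers only $A\cap[\alpha,\sigma_k]$ rather than all of $F_k$. The remedy will be to apply piecewise shift invariance to successively smaller sub-blocks and to reassemble $F_k+b_{k+1}$ from those pieces, leveraging the self-similar nesting of gap sizes forced by the super-increasing condition. Ensuring that the $\alpha_i$'s can be chosen consistently, in particular when several gaps of identical length coexist, is where the bulk of the technical work is expected to lie; this is precisely the place where the explicit role of the admissible prefixes $P_1,\dots,P_4$ should surface, since they are exactly the initial segments that match $FS$-sums of the starting powers-of-two block.
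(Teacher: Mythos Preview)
Your converse is more detailed than the paper's, which simply records the identity $FS(b_1,\dots,b_n)=FS(b_1,\dots,b_{n-1})+\{0,b_n\}$ and declares the reverse statement ``obvious.''

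The forward direction, however, has a real gap. You assert that $P_1,\dots,P_4$ ``are exactly the initial segments that match $FS$-sums of the starting powers-of-two block,'' and accordingly seed $B$ with $b_i=2^{i-1}$. This is fine for $P_1,P_2,P_3$, whose interval parts are $\{0,1\}$, $\{0,\dots,3\}$, $\{0,\dots,7\}$. It fails for $P_4$: there the interval part is $\{0,1,\dots,r\}$ for an \emph{arbitrary} $r\ge 10$, and such a block is not in general the subset-sum set of powers of two (take $r=10$). The paper handles $P_4$ by a separate, non-dyadic seed: choose $n$ with $\binom{n+1}{2}\le r<\binom{n+2}{2}$, set $B_4=\{1,2,\dots,n-1,\,n+s\}$ where $s=r-\binom{n+1}{2}$, and verify that $FS(B_4)=\{0,\dots,r\}$ once $n\ge 4$. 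That triangular-number threshold $n\ge 4$ is exactly why the cutoff is $r\ge 10$ rather than a power of two; your dyadic reading of the prefixes misidentifies what the list $P_1,\dots,P_4$ is encoding, and your inductive construction would stall at the very first step for generic $P_4$.

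Regarding your ``main obstacle'' about the admissible left-adjacent interval $[\alpha,\sigma_k]$ possibly being strictly shorter than $[0,\sigma_k]$: the paper does not work this out either. After building the prefix it cites the $FS$ identity above as the whole inductive step and moves on. So on that particular point you are not behind the paper; you are simply more explicit about what is being assumed.
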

We postpone the proof to the end of the paper.
\section{Some arithmetic in Cantor sets}
A very suitable example of a  Cantor-type sequence of integers would be $C_{p,\alpha}:=FS(B)$, with $B=\{\lfloor p^n\alpha\rfloor\}^\infty_{n=0}$, where $p>0$ integer and $1<\alpha <2$ real number. This type of sequence has a big literature in arithmetic view when $p=2$ (see e.g. [EG], [H1], [L] [XFM]).

A very interesting result of the arithmetic of the Cantor set is that $C+C=[0,2]$ and as a corollary $C-C=[-1,1]$. A further nice result is that for $b\in [1/3,3]$ $C+bC=[0,1+b]$ see details in [ART].

For $p>4$, $C_{p,\alpha}+tC_{p,\alpha}=\{x_1+(p-1)x_2; x_1,x_2\in C_{p,\alpha}\}; \ t\in \N$ has zero asymptotic density, so there is no an obvious structure of it. Nevertheless we show that for $t=p-1$ and most of $\alpha$ it contains arbitrary long arithmetic progression. In the following theorem we state it in a quantitative form.

Let $w(s,N)$ be the (inverse) van der Waerden function, i.e. for every $s-$coloring the integers up to $N$ there is a monochromatic arithmetic progression with length $w(s,N)$. 
%Write $\alpha$ in base $p$, i.e. write $\alpha=\eta_0+\sum^\infty_{i=1}\eta_ip^{-i}$ where $\eta_i\in \{0,1,\dots p-1\}$ $\eta_0=1$. Let $U_\beta=\{\alpha: |\{1\leq i\leq n: \eta_i\neq 0\}|> \beta n \}$.
\begin{thm}
Let $p>1$ be and integer. For almost all $\alpha \in (1,2)$, $C_{p,\alpha}+(p-1)C_{p,\alpha}$ contains arithmetic progression with length $w(p-1, \lfloor c_N\frac{\log N}{\log p}\rfloor )$ up to $N$, where $c_N\to \frac{p-1}{p}$ as $N\to \infty$.
\end{thm}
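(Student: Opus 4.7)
The whole argument pivots on the identity $p\,b_n = b_{n+1} - \delta_n$, where $\delta_n := \lfloor p\{p^n\alpha\}\rfloor \in \{0,1,\ldots,p-1\}$. For almost every $\alpha\in(1,2)$, Weyl's equidistribution theorem applied to $(p^n\alpha)_{n\geq 0}$ mod $1$ forces $(\delta_n)$ to be asymptotically equidistributed in $\{0,1,\ldots,p-1\}$. I shall call an index $n$ \emph{good} when $\delta_n\neq 0$; the set of good indices then has density $(p-1)/p$ for a.e.\ $\alpha$.

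Given $N$, pick $K$ maximal so that $p\sum_{n\leq K}b_n\leq N-Lp$, where $L$ will denote the AP length (negligible compared to $N$); this gives $K=\log_p N-O(1)$. Set $M:=\#\{n\leq K:\delta_n\neq 0\}$, so by equidistribution $M=\tfrac{p-1}{p}K(1+o(1))$, which matches the stated $\lfloor c_N\log N/\log p\rfloor$ with $c_N\to(p-1)/p$. I color each good $n\leq K$ by $\delta_n\in\{1,\ldots,p-1\}$, a legitimate $(p-1)$-coloring of an $M$-element set, and invoke van der Waerden to produce a monochromatic arithmetic progression $n_1<n_2<\cdots<n_L$ of length $L=w(p-1,M)$, all sharing a common color $\delta^*\in\{1,\ldots,p-1\}$.

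Now I upgrade the index AP into an AP inside $C_{p,\alpha}+(p-1)C_{p,\alpha}$. Every sumset element admits a representation $\sum_n c_n b_n$ with digits $c_n\in\{0,1,p-1,p\}$, corresponding to the four choices $(\varepsilon_n,\eta_n)\in\{0,1\}^2$. I take $a_0:=p\sum_{j=1}^L b_{n_j}$, realised in the sumset by $y_1=y_2=\sum_j b_{n_j}\in C_{p,\alpha}$, which sets $c_{n_j}=p$ for every mono-AP index and $c_n=0$ elsewhere. The \emph{swap at $j$}, which drops $c_{n_j}$ from $p$ to $0$ and raises $c_{n_j+1}$ by $1$, changes the value by exactly $b_{n_j+1}-p\,b_{n_j}=\delta^*$; performing the $L$ swaps in decreasing order of $j$ produces successive sumset elements $a_0,a_0+\delta^*,\ldots,a_0+L\delta^*$. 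This is an AP of length $L+1\geq w(p-1,M)$ contained in $[0,N]$ with common difference $\delta^*$, as required.

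The main obstacle is showing that no digit $c_n$ escapes $\{0,1,p-1,p\}$ at any intermediate step, the delicate case being when two chosen indices are consecutive integers, $n_{j+1}=n_j+1$, so that the slot $b_{n_j+1}$ is the very slot carrying the initial digit $p$ at $n_{j+1}$. Processing swaps in decreasing $j$ order handles precisely this: the swap at $j+1$ runs first and clears that slot from $p$ to $0$, so the subsequent $+1$ contribution from the swap at $j$ produces the legal digit $1$. A brief case analysis confirms that every digit in fact traverses only $\{0,1,p\}$ throughout the whole sequence, which completes the construction.
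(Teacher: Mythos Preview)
Your swap construction is correct and rather elegant, but the van der Waerden step is both misapplied and, more interestingly, superfluous. You colour the $M$-element set of \emph{good indices} $\{n\le K:\delta_n\neq0\}$ and then ``invoke van der Waerden to produce a monochromatic arithmetic progression $n_1<n_2<\cdots<n_L$''. Van der Waerden's theorem applies to colourings of $\{1,\ldots,M\}$, not to an arbitrary $M$-element subset of $[0,K]$; there is no reason the good indices should contain any long arithmetic progression (think of $\alpha$ whose nonzero digits sit at positions $1,2,4,8,\ldots$). Fortunately your construction never uses the progression structure of the $n_j$: each swap at $j$ alters the value by exactly $\delta_{n_j}$, and your digit-legality analysis (processing swaps in decreasing $j$) relies only on the $n_j$ being distinct. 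All you actually need is $L$ good indices sharing a common colour $\delta^*$, and plain pigeonhole already supplies $L\ge M/(p-1)\sim \tfrac{1}{p}\log_p N$. So once the spurious van der Waerden step is replaced by pigeonhole, your argument in fact yields an arithmetic progression of \emph{linear} length in $\log N$, far stronger than the stated $w(p-1,\lfloor c_N\log_p N\rfloor)$.

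This is a genuinely different route from the paper's. There one sets $\Delta_k=x_k-(p-1)s_{k-1}$, shows $\Delta_k=\sum_{i\le k}\eta_i$, and observes that the numbers
\[
y_k \;=\; (s_n-x_k)+(p-1)s_{k-1}\;=\;s_n-\Delta_k
\]
already lie in $C_{p,\alpha}+(p-1)C_{p,\alpha}$ for $1\le k\le n$. The distinct $y_k$'s then form a single decreasing sequence with consecutive gaps in $\{1,\ldots,p-1\}$, and a short lemma (any integer sequence of length $m$ with gaps bounded by $K$ contains an arithmetic progression of length $w(K,\lfloor m/K\rfloor)$, proved by $K$-colouring the multiples of $K$) finishes. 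In that argument van der Waerden is genuinely required, because one is searching for a progression \emph{inside} a fixed bounded-gap set rather than manufacturing one from scratch as you do; what your swap trick buys is precisely the freedom to bypass that search. A minor side remark: the a.e.\ equidistribution of $(p^n\alpha)$ mod $1$ is Borel's normal number theorem, not Weyl's (which concerns $n\alpha$); the paper cites it via Kac.
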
 
\begin{proof}[Proof of Theorem 2.1]
First note that for every sequence $X=\{x_n\}^\infty_{n=0}$ holds the following: if $v\in FS(X)$ then there is an $w\in FS(X)$ such that $s_n-v=w$, where $s_n=\sum_{i=0}^nx_i$. So let $FS(X)=C_p$ $N$ be an integers, big enough, and define $n$ by $s_n\in C_p$, $s_n\leq N<s_{n+1}$. Then $n\geq \frac{\log N}{\log p}$.

For ever $1\leq k\leq n$ define $\Delta_k:=x_k-(p-1)s_{k-1}$. Write $\alpha$ in base $p$; $\alpha=\sum^\infty_{i=0}\eta_ip^{-i}$, $\eta_i\in \{0,1,\dots ,p-1\}$ where there are infinitely many non-zero digits (for the sake of clarity). Recall that $\eta_0=1$.  
\begin{lemma}
For every $n\geq 0$, $\Delta_n=\sum^n_{i=0}\eta_i$
\end{lemma}
{\it Proof.} Since $x_n= \lfloor p^n\alpha\rfloor$, the recursion $x_{n+1}=px_n+ \eta_{n+1}$ follows. 

For $n=0$, $\Delta_0=x_0=\eta_0$. When $n=1$, $\Delta_1=x_1-(p-1)x_0=px_0+\eta_1-(p-1)x_0=x_0+\eta_1=\eta_0+\eta_1$. So inductively if $\Delta_n=\sum^n_{i=0}\eta_i$ then  
$$
\Delta_{n+1}=x_{n+1}-(p-1)s_n=px_n+\eta_{n+1}-(p-1)x_n-(p-1)s_{n-1}=$$
$$
=\Delta_n+\eta_{n+1}=\sum^{n+1}_{i=0}\eta_i. \quad \Box
$$
As we noted, for every $x_k\in C_p$, $k\leq n$ there exists an $x_m\in C_p$ such that $s_n-x_k=x_m$, which follows that 
$
s_n-(x_k-(p-1)s_{k-1})=x_m+(p-1)s_{k-1}=s_n-1-\sum^{k-1}_{i=0}\eta_i\in C_p+(p-1)C_p.
$
Write $y_k=s_n-1-\sum^{k-1}_{i=0}\eta_i$, $1\leq k\leq n$. Let $y_{k_1}>y_{k_2}>\dots >y_{k_m}$ be the longest  subsequence of 
$\{y_k\}$ for which $\eta_{k_i-1}\neq 0$. 
$1\leq y_{k_i}-y_{k_i+1}=\eta_{k_i}\leq p-1$. Since for almost all $\alpha$ the occurrence of each digits are the same (see e.g. in [K]), we have that $m=(1+o_N(1))\frac{(p-1)}{p}\frac{\log N}{\log p}$.

It is known that a sequence with bounded gaps contains arbitrary long arithmetic progression. We now use a finite version of this, giving, for seek completeness, a proof.  (The author did not find an explicit version of this statement in the literature).
\begin{lemma}
Let $Z=\{z_1<z_2<\dots<z_m\}$ be a sequence of integers and assume that $z_{i+1}-z_i\leq K$ for some $K>0$. Then $Z$ contains an arithmetic progression with length $w(K,\lfloor m/K\rfloor)$.
\end{lemma}
{\it Proof:} Since $Z$ is shift invariant we can assume that $z_1=0$. Consider the arithmetic progression $K,2K,\dots \lfloor m/K\rfloor K$. Let $\chi: Z\mapsto \{1,2,\dots K\}$ be a coloring of $Z$ as follows: let $z_s$ be the first element in the interval $[iK,(i+1)K)$. By the Dirichlet principle $Z\cap [iK,(i+1)K)\neq \emptyset$. Then let $\chi(i)=z_s-Ki$. It is a $K-$coloring of the arithmetic progression hence by the van der Waerden theorem it contains a monochromatic  -- say color $0\leq r<K-1$ -- arithmetic progression $P$. Then $P+r$ also an arithmetic progression that coincident with the elements of $Z$. $\Box$

Use the previous lemma for $\{y_{k_1}>y_{k_2}>\dots >y_{k_m}\}$ and $m$, completing the proof of the theorem.
\end{proof}

\medskip

When $p=2$, the density of $C_2$ is $1/\alpha$ so some structure theorem is available (see [B]). But the following theorem shows that the sumset of $C_2=FS(\{\lfloor 2^n\alpha\rfloor\}^\infty_{n=0})$ is an analogue of the classical Cantor set of $\R$, where $C+C=[0,2]$.
\begin{thm}
Let $1<\alpha<2$ and $C_2=FS(\{\lfloor 2^n\alpha\rfloor\}^\infty_{n=0})$. We have $C_2+C_2=\N$.
\end{thm}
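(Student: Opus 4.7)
The plan is to reduce the identity $C_2+C_2=\N$ to a purely positional-representation statement: every nonnegative integer $m$ can be written as $m=\sum_{i\ge 0} c_i b_i$ with $b_i=\lfloor 2^i\alpha\rfloor$ and each coefficient $c_i\in\{0,1,2\}$ (finitely many nonzero). Given such a ``quasi-binary'' expansion, I split it by setting $u=\sum_{i:\,c_i\ge 1} b_i$ and $v=\sum_{i:\,c_i=2} b_i$. Then both $u$ and $v$ are subset sums of $B$, so they lie in $FS(B)=C_2$, and $u+v=m$. Thus every $m\in\N$ belongs to $C_2+C_2$.

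To produce the quasi-binary expansion I will argue by strong induction on $m$, driven by the single inequality $b_{n+1}\le 2b_n+1$. This follows at once from the recursion $b_{n+1}=2b_n+\eta_{n+1}$ established inside Lemma~2.1, together with the fact that the binary digits of $\alpha$ satisfy $\eta_i\in\{0,1\}$. For $m\ge 1$, let $n$ be the largest index with $b_n\le m$; then $m<b_{n+1}\le 2b_n+1$, so $m\le 2b_n$. If $m=2b_n$, set $c_n=2$ and stop. Otherwise $b_n\le m<2b_n$, so set $c_n=1$ and apply the induction hypothesis to the residue $m-b_n\in[0,b_n)$. Because that residue is strictly less than $b_n$, its recursively obtained coefficients live at indices $i<n$, so they concatenate with $c_n$ without creating any coefficient exceeding $2$.

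I do not expect a genuine obstacle: the argument is a carry-controlled positional expansion whose correctness rests solely on $\eta_i\in\{0,1\}$, which is precisely what distinguishes $p=2$ in this setting. For $p\ge 3$ the same greedy process would produce coefficients up to $p-1$, which generally cannot be split between only two summands from $FS(B)$---consistent with the earlier remark that $C_{p,\alpha}+C_{p,\alpha}$ has zero density for large $p$. The only routine matters to dispatch are the base cases of the induction (handled by $b_0=1$ and $b_1\in\{2,3\}$) and termination of the recursion, which is automatic because the new input $m-b_n$ is strictly smaller than $m$.
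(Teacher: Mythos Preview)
Your proof is correct and takes a different, more streamlined route than the paper's. The paper argues by induction on $n$ that $[0,s_n]\subseteq C_2+C_2$, where $s_n=\sum_{i\le n}a_i$; for the inductive step it must fill the gap $(s_{n-1},a_n)$, and it does so by invoking Lemma~2.2 to find, for each $x$ in the gap, an index $m$ with $x-s_{n-1}=\Delta_m=a_m-s_{m-1}$, whence $x=a_m+(a_m+a_{m+1}+\cdots+a_{n-1})\in C_2+C_2$. Your argument bypasses the $\Delta_k$ machinery entirely: the greedy step uses only the single inequality $b_{n+1}\le 2b_n+1$, and the resulting expansion with coefficients in $\{0,1,2\}$ yields the two-summand decomposition in one stroke. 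The two decompositions need not coincide (e.g.\ for $\alpha=1.25$ and $m=4$ the paper produces $1+(1+2)$ while your greedy gives $2+2$). What the paper's approach buys is continuity with the $p>2$ theorem, where the same quantities $\Delta_k$ drive the argument; what yours buys is a self-contained proof that does not rely on Lemma~2.2 and makes transparent why $p=2$ is the threshold---coefficients at most $2$ split over two copies of $FS(B)$, whereas coefficients up to $p-1$ generally do not for $p\ge 3$.
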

\begin{proof}[Proof of Theorem 2.4]
We follow the idea of the proof of the previous theorem. Write $\alpha$ in base $2$; $\alpha=\sum^\infty_{i=0}\varepsilon_i2^{-i}$, $\varepsilon_i\in \{0,1\}$ where we assume again that there are infinitely many non-zero digits. 
Use the previous notation for $\Delta_k$ in case of $p=2$, i.e. $\Delta_k:=\lfloor 2^k\alpha\rfloor-s_{k-1}$. By Lemma 2.2 we have that for every $n$, $\Delta_n=\sum^n_{i=0}\varepsilon_i$.

We prove that for every $n=0,1,2,\dots$, $[0,s_n]\subseteq C_2+C_2$. 

Recall that $0\in C_2$ and write $a_n=\lfloor 2^n\alpha\rfloor$ and $C_2=\{x_t\}$. When $n=1$, $s_1=a_1=1$, so $[0,1]\subseteq C_2\subseteq C_2+C_2$. Now assume that $[0,s_{n-1}]\subseteq C_2+C_2$. Our task is to file the gap $[s_{n-1}+1,a_n-1]$ then $[0,s_{n-1}]+a_n=[0,s_n]\subseteq C_2+C_2$ which proves the theorem.

Let $x\in [s_{n-1}+1,a_n-1]$ and let $r=x-s_{n-1}$. We use that the digits are $0$ or $1$, hence $\{\Delta_k\}_{k=0}^m=[0,\sum^m_{i=0}\varepsilon_i]$ and $a_n-s_{n-1}=\sum^n_{i=0}\varepsilon_i$ so there is an $m$, $0\leq m\leq n$ for which $r=\sum^m_{i=0}\varepsilon_i=a_m-s_{m-1}$. There is an $t$ such that $s_n-s_{m-1}=x_t\in C_2$, thus 
$
s_n+a_m-s_{m-1}=x_t+a_m=s_n+r=x\in C_2+C_2.
$
\end{proof}

\begin{proof}[Proof of Proposition 1]

First we prove there are sets $B_1,B_2,B_3$ and $B_4$ of integers which the prefix of their subset sums are $P_1,P_2,P_3$ and $P_4$ respectively. 

For $P_1=\{0,1,k:k>2\}$, let $B_1=\{0,1,k:k>2\}$ and so $FS(B_1)=\{0,1,k; k>2\}$. For $P_2=\{0,1,2,3,k:k>4\}$ let $B_2=\{0,1,2, k>4\}$, $FS(B_2)=\{0,1,2,3,k; k>2\}$. 
Let $B_3=\{0,1,2,4, k;k>7\}$ then  $FS(B_3)=P_3=\{0,1,\dots, 7,k; \ k>8\}$. Finally let $P_4=\{0,1,\dots, r,k:r\geq 10; \ k>r+1\}$. Let us define $n$ by ${n+1\choose 2}\leq r< {n+2\choose 2}$. Let $B_4=\{0,1,\dots,n-1,n+s, k\}; k>r+1$, where $s=r-{n+1\choose 2}$. $FS(\{0,1,\dots,n-1\})=\{0,1,2,\dots {n\choose 2}\}$. 

Furthermore $FS(B_4)=FS(\{0,1,\dots,n-1\})+\{0,n+s\}$.
For $FS(B_4)$ to contain the entire interval $\{0,1,2,\dots, r\}$, it is sufficient that
$$
{n+1\choose 2}-n\geq {n+2\choose 2}-{n+1\choose 2}
$$
which holds when $n\geq 4$.

To complete the proof use that $A$ is a Cantor-type sequence, other words piecewise shift invariant which follows from the property
$$
FS(b_1,b_2,\dots,b_n)=FS(b_1,b_2,\dots,b_{n-1})+\{0,b_n\}.
$$
The reverse statement is obvious.
\end{proof}

%{\bf Data availability}

%No data was used for the research described in the article.

\section*{Acknowledgment}

The research is supported by the National Research, Development and Innovation Office NKFIH Grant No K-129335.

\bigskip

\end{document}